\newtheorem{theorem}{Theorem}[section]
\newtheorem{lemma}[theorem]{Lemma}
\newtheorem{definition}[theorem]{Definition}
\newtheorem{remark}[theorem]{Remark}
\newtheorem{proposition}[theorem]{Proposition}
\newtheorem*{theorem A}{Theorem A}
\newtheorem*{corollary B}{Corollary B}
\newtheorem*{theorem C}{Theorem C}
\newtheorem*{corollary D}{Corollary D}
\theoremstyle{definition}
\begin{document}
\title[Variational principle for neutralized Bowen topological entropy of NDSs]{Variational principle for neutralized Bowen topological entropy on subsets of non-autonomous dynamical systems}
\author[J. Nazarian Sarkooh]{Javad Nazarian Sarkooh$^{*}$}
\address{Department of Mathematics, Ferdowsi University of Mashhad, Mashhad, IRAN.}
\email{\textcolor[rgb]{0.00,0.00,0.84}{javad.nazariansarkooh@gmail.com}}
\author[A. Ehsani]{Azam Ehsani}
\address{Department of Mathematics, Ferdowsi University of Mashhad, Mashhad, IRAN.}
\email{\textcolor[rgb]{0.00,0.00,0.84}{aza\_ehsani@yahoo.com}}
\author[Z. Pashaei]{Zeynal Pashaei}
\address{Department of Mathematics, Ferdowsi University of Mashhad, Mashhad, IRAN.}
\email{\textcolor[rgb]{0.00,0.00,0.84}{szpashaei1986@yahoo.com}}
\author[R. Abdi]{Roghayeh Abdi}
\address{Department of Mathematics, Ferdowsi University of Mashhad, Mashhad, IRAN.}
\email{\textcolor[rgb]{0.00,0.00,0.84}{a.roghayeh86@gmail.com}}
\subjclass[2010]
{37B55; 37B40; 37A50; 37D35.}
 \keywords{Non-autonomous dynamical system, neutralized Bowen topological entropy, neutralized weighted Bowen topological entropy, lower neutralized Brin-Katok's local entropy, neutralized Katok's entropy, variational principle.}
 \thanks{$^*$Corresponding author}
\begin{abstract}
Ovadia and Rodriguez-Hertz \cite{SBOFRH} defined the neutralized Bowen open ball for an autonomous dynamical system $(X,f)$ on a compact metric space $(X,d)$ as
\begin{equation*}
B_{n}(x,\text{e}^{-n\epsilon})=\{y\in X: d(f^{j}(x),f^{j}(y))<\text{e}^{-n\epsilon}\ \text{for all}\ 0\leq j\leq n-1\},
\end{equation*}
where $\epsilon>0$, $n\in\mathbb{N}$, and $x\in X$. Replacing the usual Bowen open ball with neutralized Bowen open ball, we introduce the notions of neutralized Bowen topological entropy of subsets, neutralized weighted Bowen topological entropy of subsets, lower neutralized Brin-Katok's local entropy of Borel probability measures, and neutralized Katok's entropy of Borel probability measures for a non-autonomous dynamical system $(X,f_{1,\infty})$ on a compact metric space $(X,d)$. Then, we establish the following variational principles for neutralized Bowen topological entropy and neutralized weighted Bowen topological entropy of non-empty compact subsets in terms of lower neutralized Brin-Katok's local entropy and neutralized Katok's entropy.
\begin{eqnarray*}
h_{top}^{NB}(f_{1,\infty},Z)=h_{top}^{NWB}(f_{1,\infty},Z)
&=& \lim_{\epsilon\to 0}\sup\{\underline{h}_{\mu}^{NBK}(f_{1,\infty},\epsilon):\mu\in\mathcal{M}(X), \mu(Z)=1\}\\
&=& \lim_{\epsilon\to 0}\sup\{h_{\mu}^{NK}(f_{1,\infty},\epsilon):\mu\in\mathcal{M}(X), \mu(Z)=1\},
\end{eqnarray*}
where $\mathcal{M}(X)$ denotes the set of all Borel probability measures on $X$ and $Z$ is a non-empty compact subset of $X$. Moreover, $h_{top}^{NB}(f_{1,\infty},Z)$, $h_{top}^{NWB}(f_{1,\infty},Z)$, $\underline{h}_{\mu}^{NBK}(f_{1,\infty},\epsilon)$, and $h_{\mu}^{NK}(f_{1,\infty},\epsilon)$ are the neutralized Bowen topological entropy of $Z$, neutralized weighted Bowen topological entropy of $Z$, lower neutralized Brin-Katok's local entropy of $\mu$, and neutralized Katok's entropy of $\mu$, respectively. In particular, this extends the main result of \cite{YRCEZX} for autonomous dynamical systems.
\end{abstract}

\maketitle
\thispagestyle{empty}

\section{Introduction and statement of main results}
Throughout this paper $X$ is a compact metric space with metric $d$ and $f_{1,\infty}=(f_n)_{n=1}^\infty$ is a sequence of continuous maps $f_n: X\to X$, such a pair $(X, f_{1,\infty})$ is a non-autonomous dynamical system so-called time-dependent (or NDS for short). By $\mathcal{M}(X)$ we denote the set of all Borel probability measures on $X$.

The non-autonomous systems yield very flexible models than autonomous cases for the study and description of real-world processes. They may be used to describe the evolution of a wider class of phenomena, including systems that are forced or driven. Recently, there have been major efforts in establishing a general theory of such systems (see \cite{BV,JNSFHG1,K1,K2,K3,KR,KS,JNSFHG,JNS000,JNS0000,O,DTRD}), but a global theory is still out of reach. 

In the theory of dynamical systems, entropies are nonnegative extended real numbers measuring the complexity of a dynamical system. Topological entropy was first introduced by Adler et al. \cite{AKM} via open covers for continuous maps in compact topological spaces. In 1970, Bowen \cite{RB} gave another definition in terms of separated and spanning sets for uniformly continuous maps in metric spaces, and this definition is equivalent to Adler's definition for continuous maps in compact metric spaces. Later, for noncompact sets, Bowen \cite{RB4} gave a characterization of dimension type for the entropy which was further investigated by Pesin and Pitskel \cite{PYBPBS}. Also, the measure-theoretic (metric) entropy for an invariant measure was introduced by Kolmogorov \cite{KAAAA}. The basic relation between topological entropy and measure-theoretic entropy is the variational principle, see \cite{PW}. Topological entropy which is an important tool to understand the complexity of dynamical systems plays a vital role in topological dynamics, ergodic theory, mean dimension theory, and other fields of dynamical systems. Moreover, it has close relationships with many important dynamical properties, such as chaos, Lyapunov exponents, the growth of the number of periodic points, and so on. Our main goal in this paper is to focus on the interplay between ergodic theory and topological entropy of non-autonomous dynamical systems.

In 1996, Kolyada and Snoha extended the concept of topological entropy to non-autonomous systems, based on open covers, separated sets, and spanning sets, and obtained a series of important properties of these systems \cite{KS}. Also, Kawan \cite{K1,K2,K3} introduced and studied the notion of metric entropy for
non-autonomous dynamical systems and showed that it is related via variational inequality (principle) to the topological entropy as defined by Kolyada and Snoha. More precisely, for equicontinuous topological
non-autonomous dynamical systems
$(X_{1,\infty},f_{1,\infty})$, he proved the variational inequality
\begin{equation*}
\sup_{\mu_{1,\infty}}h_{\mathcal{E}_{M}}(f_{1,\infty},\mu_{1,\infty})\leq h_{\text{top}}(f_{1,\infty}),
\end{equation*}
where $h_{\text{top}}(f_{1,\infty})$ and $h_{\mathcal{E}_{M}}(f_{1,\infty},\mu_{1,\infty})$ are the topological
and metric entropy of $(X_{1,\infty},f_{1,\infty})$, the supremum is taken over all invariant measure sequences $\mu_{1,\infty}$ and $\mathcal{E}_{M}$ is Misiurewicz class of partitions. Also, he proved the following variational principle
\begin{equation*}
\sup_{\mu_{1,\infty}}h_{\mathcal{E}_{M}}(f_{1,\infty},\mu_{1,\infty})=h_{\text{top}}(f_{1,\infty})
\end{equation*}
for non-autonomous dynamical systems $(M,f_{1,\infty})$ built from $C^{1}$ expanding maps $f_{n}$ on a compact Riemannian manifold $M$ with uniform bounds on expansion factors and derivatives that act in the same way on the fundamental group of $M$. Moreover, Nazarian Sarkooh \cite{JNS0000} showed the following variational principle 
\begin{equation*} 
h_{top}^{B}(f_{1,\infty},Z)=h_{top}^{WB}(f_{1,\infty},Z)=\sup\{\underline{h}_{\mu}(f_{1,\infty}):\mu\in\mathcal{M}(X), \mu(Z)=1\}
\end{equation*}
for non-autonomous dynamical system $(X, f_{1,\infty})$ on a compact metric space $X$ which links the Bowen topological entropy $h_{top}^{B}(f_{1,\infty},Z)$ and weighted Bowen topological entropy $h_{top}^{WB}(f_{1,\infty},Z)$ of non-empty compact subset $Z$ to the measure-theoretic lower entropy  $\underline{h}_{\mu}(f_{1,\infty})$ of Borel probability measures.

Recently, Ovadia and Rodriguez-Hertz \cite{SBOFRH} defined the neutralized Bowen open ball for an autonomous dynamical system $(X,f)$ on a compact metric space $X$ with metric $d$ as
\begin{equation*}
B_{n}(x,\text{e}^{-n\epsilon})=\{y\in X: d(f^{j}(x),f^{j}(y))<\text{e}^{-n\epsilon}\ \text{for}\ 0\leq j<n\},
\end{equation*}
where $\epsilon>0$, $n\in\mathbb{N}$, and $x\in X$. As the Brin-Katok's entropy formula shows, the usual Bowen open balls $\{B_{n}(x,\epsilon)\}$ allow us to control their measure and image under the dynamics for iterations, while it may possess complicated geometric shape in a central direction. Ovadia and Rodriguez-Hertz by replacing the usual Bowen open balls $\{B_{n}(x,\epsilon)\}$ with neutralized Bowen open balls 
$\{B_{n}(x,\text{e}^{-n\epsilon})\}$, defined the lower neutralized Brin-Katok's local entropy to estimate the asymptotic measure of sets with a distinctive geometric shape by neutralizing the subexponential effects. Moreover, the neutralized Bowen open balls have more advantages for describing the neighborhood with a local linearization of the dynamics.

Motivated by \cite{RB4,DFWH,JNS000,SBOFRH}, a natural question is whether the previous variational principles for Bowen topological entropy and weighted Bowen topological entropy on non-empty compact subsets for a non-autonomous dynamical system still holds by considering neutralized Bowen open balls. Hence, we introduce the notions of neutralized Bowen topological entropy of subsets, neutralized weighted Bowen topological entropy of subsets, lower neutralized Brin-Katok's local entropy of Borel probability measures, and neutralized Katok's entropy of Borel probability measures through neutralized Bowen open balls for non-autonomous dynamical systems. Then, by using some ideas from geometric measure theory and utilizing the dimensional approach, we establish the following variational principles for neutralized Bowen topological entropy and neutralized weighted Bowen topological entropy of non-empty compact subsets. In particular, different from the classical variational principles for topological entropy in terms of Kolmogorov-Sinai entropy \cite{PW} and Bowen topological entropy of non-empty compact subsets in terms of lower Brin-Katok's local entropy \cite{DFWH}, the form of our variational principles for neutralized Bowen topological entropy is more close to Lindenstrauss-Tsukamoto's variational principle \cite{LETM} for metric mean dimension in terms of rate-distortion functions. 

\begin{theorem A}
Let $(X,f_{1,\infty})$ be a non-autonomous dynamical system on a compact metric space $(X,d)$ and $Z$ be a non-empty compact subset of $X$. Then
\begin{eqnarray*}
h_{top}^{NB}(f_{1,\infty},Z)=h_{top}^{NWB}(f_{1,\infty},Z)
&=& \lim_{\epsilon\to 0}\sup\{\underline{h}_{\mu}^{NBK}(f_{1,\infty},\epsilon):\mu\in\mathcal{M}(X), \mu(Z)=1\}\\
&=& \lim_{\epsilon\to 0}\sup\{h_{\mu}^{NK}(f_{1,\infty},\epsilon):\mu\in\mathcal{M}(X), \mu(Z)=1\},
\end{eqnarray*}
where $h_{top}^{NB}(f_{1,\infty},Z)$, $h_{top}^{NWB}(f_{1,\infty},Z)$, $\underline{h}_{\mu}^{NBK}(f_{1,\infty},\epsilon)$, and $h_{\mu}^{NK}(f_{1,\infty},\epsilon)$ denote the neutralized Bowen topological entropy of $Z$, neutralized weighted Bowen topological entropy of $Z$, lower neutralized Brin-Katok's local entropy of $\mu$, and neutralized Katok's entropy of $\mu$, respectively.
\end{theorem A}
As a direct consequence of Theorem $A$, we have the following corollary that is the main result of \cite{YRCEZX} for autonomous dynamical systems.
\begin{corollary B}
Let $(X,f)$ be an autonomous dynamical system on a compact metric space $(X,d)$ and $Z$ be a non-empty compact subset of $X$. Then
\begin{eqnarray*}
h_{top}^{NB}(f,Z)=h_{top}^{NWB}(f,Z)
&=& \lim_{\epsilon\to 0}\sup\{\underline{h}_{\mu}^{NBK}(f,\epsilon):\mu\in\mathcal{M}(X), \mu(Z)=1\}\\
&=& \lim_{\epsilon\to 0}\sup\{h_{\mu}^{NK}(f,\epsilon):\mu\in\mathcal{M}(X), \mu(Z)=1\},
\end{eqnarray*}
where $h_{top}^{NB}(f,Z)$, $h_{top}^{NWB}(f,Z)$, $\underline{h}_{\mu}^{NBK}(f,\epsilon)$, and $h_{\mu}^{NK}(f,\epsilon)$ denote the neutralized Bowen topological entropy of $Z$, neutralized weighted Bowen topological entropy of $Z$, lower neutralized Brin-Katok's local entropy of $\mu$, and neutralized Katok's entropy of $\mu$, respectively.
\end{corollary B}
 
\textbf{This paper is organized as follows.} In Section \ref{section2}, we give a precise definition of a
non-autonomous dynamical system and introduce the notions of neutralized Bowen topological entropy of subsets, neutralized weighted Bowen topological entropy of subsets, lower neutralized Brin-Katok's local entropy of Borel probability measures, and neutralized Katok's entropy of Borel probability measures for 
non-autonomous dynamical systems. Moreover, we give a few preparatory results needed for the proof of the main results. The proof of Theorem A is given in Section \ref{section3}.
%%%%%%%%%%%%%%%%%%%%%%%%%%%%%%%%%%%%%%%%%%%%%%%%%%
%%%%%%%%%%%%%%%%%%%%%%%%%%%%%%%%%%%%%%%%%%%%%%%%%%
\section{Definitions and Preliminary}\label{section2}
A \emph{non-autonomous} or \emph{time-dependent} dynamical system (an \emph{NDS} for short), is a pair $(X_{1,\infty}, f_{1,\infty})$, where $X_{1,\infty}=(X_n)_{n=1}^\infty$ is a sequence of sets
and $f_{1,\infty}=(f_n)_{n=1}^\infty$ is a sequence of maps $f_n: X_n \to X_{n+1}$.
If all the sets $X_n$ are compact metric spaces and all the $f_n$ are continuous,
we say that $(X_{1,\infty}, f_{1,\infty})$ is a \emph{topological NDS}.
Here, we assume that $X$ is a compact metric space with metric $d$, all the sets $X_{n}$ are equal to the set $X$ and we abbreviate $(X_{1,\infty}, f_{1,\infty})$ by $(X, f_{1,\infty})$. Throughout this paper, we work with topological NDSs and use NDS instead of topological NDS for simplicity.
The time evolution of the system is defined by composing the maps $f_{n}$ in an obvious way.
In general, we define
\begin{equation*}
f_i^n:=f_{i+n-1}\circ\cdots\circ f_{i+1}\circ f_i \ \ \text{for} \ i,n\in \mathbb{N},\ \text{and} \ f_i^0:=\text{id}_X.
\end{equation*}
Subset $Z$ of $X$ is said to be \emph{forward} $f_{1,\infty}$-\emph{invariant} if  $f_{n}(Z)\subset Z$ for all $n\in\mathbb{N}$.
%%%%%%%%%%%%%%%%%%%%%%%%%%%%%%%%%%%%%%%%%%%%%%%%%%

In what follows, we introduce the notions of neutralized Bowen topological entropy of subsets, neutralized weighted Bowen topological entropy of subsets, lower neutralized Brin-Katok's local entropy of Borel probability measures, and neutralized Katok's entropy of Borel probability measures for NDSs. Moreover, we give a few preparatory results needed for the proof of the main results.
%%%%%%%%%%%%%%%%%%%%%%%%%%%%%%%%%%%%%%%%%%%%%%%%%%
%%%%%%%%%%%%%%%%%%%%%%%%%%%%%%%%%%%%%%%%%%%%%%%%%%
\subsection{Neutralized Bowen topological entropy}
Let $(X, f_{1,\infty})$ be an NDS on a compact metric space $(X,d)$. Given $i,n\in\mathbb{N}$, $x,y\in X$, the $(i,n)$-\emph{Bowen metric} $d_{i,n}$ on $X$ is defined as 
\begin{equation*}
d_{i,n}(x,y):=\max_{0\leq j\leq n-1}d(f_{i}^{j}(x),f_{i}^{j}(y)).
\end{equation*}
Then the \emph{Bowen open ball} of radius $\epsilon>0$ and order $n$ with initial time $i$ around $x$ is given by
\begin{equation*}
B(x;i,n,\epsilon):=\{y \in X: d_{i,n}(x,y)<\epsilon\}.
\end{equation*}

Following the idea of \cite{SBOFRH}, we define the neutralized Bowen open ball for NDSs by replacing the radius $\epsilon$ in the usual Bowen open ball with $\text{e}^{-n\epsilon}$. Precisely, the \emph{neutralized Bowen open ball} of radius $\epsilon>0$ and order $n$ with initial time $i$ around $x$ is given by
\begin{equation*}
B(x;i,n,\text{e}^{-n\epsilon}):=\{y \in X: d_{i,n}(x,y)<\text{e}^{-n\epsilon}\}.
\end{equation*}
For simplicity, denote $B(x;1,n,\epsilon)$, $B(x;1,n,\text{e}^{-n\epsilon})$, and $d_{1,n}$ by $B_{n}(x,\epsilon)$, $B_{n}(x,\text{e}^{-n\epsilon})$, and $d_{n}$, respectively.

Now, by following the idea of \cite{BMKA,DFWH,JNS0000}, we define the so-called neutralized Bowen topological entropy of a subset that is not necessarily compact or forward $f_{1,\infty}$-invariant.

Let $Z$ be a non-empty subset of $X$. Given $n\in\mathbb{N}$, $\alpha\in\mathbb{R}$, and $\epsilon>0$, define
\begin{equation*}
M_{f_{1,\infty}}(n,\alpha,\epsilon,Z):=\inf\bigg\{\sum_{i\in I}\text{e}^{-\alpha n_{i}}\bigg\},
\end{equation*}
where the infimum is taken over all finite or countable collections $\{B_{n_{i}}(x_{i},\text{e}^{-n_{i}\epsilon})\}_{i\in I}$ such that
$x_{i}\in X$, $n_{i}\geq n$, and $Z\subseteq\bigcup_{i\in I}B_{n_{i}}(x_{i},\text{e}^{-n_{i}\epsilon})$. 
The quantity $M_{f_{1,\infty}}(n,\alpha,\epsilon,Z)$ does not decrease as $n$ increases, hence the following limit exists
\begin{equation*}
M_{f_{1,\infty}}(\alpha,\epsilon,Z):=\lim_{n\to\infty}M_{f_{1,\infty}}(n,\alpha,\epsilon,Z).
\end{equation*}
By the construction of Carath\'{e}odory dimension characteristics (see \cite{PYB}), when $\alpha$ goes from
$-\infty$ to $\infty$, the quantity $M_{f_{1,\infty}}(\alpha,\epsilon,Z)$ jumps from $\infty$ to $0$ at a unique critical value. 
Hence we can define the number
\begin{equation*}
M_{f_{1,\infty}}(\epsilon,Z):=\sup\{\alpha:M_{f_{1,\infty}}(\alpha,\epsilon,Z)=\infty\}=\inf\{\alpha:M_{f_{1,\infty}}(\alpha,\epsilon,Z)=0\}.
\end{equation*}
%%%%%%%%%%%%%%%%%%%%%%%%%%%%%%%%%%%%%%%%%%%%%%%%%%
\begin{definition}[Neutralized Bowen topological entropy]
Let $(X, f_{1,\infty})$ be an NDS on a compact metric space $(X,d)$ and $Z$ be a non-empty subset of $X$. Then, the \emph{neutralized Bowen topological entropy} of NDS $(X,f_{1,\infty})$ on the set $Z$ is defined as
\begin{equation*}
h_{top}^{NB}(f_{1,\infty},Z):=\lim_{\epsilon\to 0} M_{f_{1,\infty}}(\epsilon,Z).
\end{equation*}
\end{definition}
%%%%%%%%%%%%%%%%%%%%%%%%%%%%%%%%%%%%%%%%%%%%%%%%%%
\begin{remark}
Denote by $h_{top}^{B}(f_{1,\infty},Z)$ the classical Bowen topological entropy of NDS $(X, f_{1,\infty})$ on a non-empty subset $Z$ of $X$ defined by Bowen open balls $\{B_{n_{i}}(x_{i},\epsilon)\}_{i\in I}$ (see \cite{JNS0000}). Then, it is easy to see that 
$h_{top}^{B}(f_{1,\infty},Z)\leq h_{top}^{NB}(f_{1,\infty},Z)$.
\end{remark}
%%%%%%%%%%%%%%%%%%%%%%%%%%%%%%%%%%%%%%%%%%%%%%%%%%
The following proposition presents some basic properties related to neutralized Bowen topological entropy of NDSs.
%%%%%%%%%%%%%%%%%%%%%%%%%%%%%%%%%%%%%%%%%%%%%%%%%%
\begin{proposition}
Let $(X, f_{1,\infty})$ be an NDS on a compact metric space $(X,d)$ and $Z$, $Z_{1}$, $Z_{2}$ be non-empty subsets of $X$. Then, the following statements hold.
\begin{itemize}
\item[1)]
The value of $h_{top}^{NB}(f_{1,\infty},Z)$ is independent of the choice of metrics on $X$.
\item[2)]
If $Z_{1}\subset Z_{2}$, then $h_{top}^{NB}(f_{1,\infty},Z_{1})\leq h_{top}^{NB}(f_{1,\infty},Z_{2})$.
\item[3)]
If $Z$ is a countable union of $Z_{i}$, then $M_{f_{1,\infty}}(\epsilon,Z)=\sup_{i}M_{f_{1,\infty}}(\epsilon,Z_{i})$ for every $\epsilon>0$.
\item[4)]
If $Z$ is a finite union of $Z_{i}$, then $h_{top}^{NB}(f_{1,\infty},Z)=\max_{i}h_{top}^{NB}(f_{1,\infty},Z_{i})$.
\end{itemize}
\end{proposition}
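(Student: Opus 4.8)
The plan is to reduce all four items to two soft facts about the set function $Z\mapsto M_{f_{1,\infty}}(n,\alpha,\epsilon,Z)$ — monotonicity under inclusion and countable subadditivity — and then to push these through the iterated limits ($n\to\infty$, then $\epsilon\to 0$) and through the threshold $M_{f_{1,\infty}}(\epsilon,Z)=\inf\{\alpha:M_{f_{1,\infty}}(\alpha,\epsilon,Z)=0\}$. Only the subadditivity carries real content; the rest is bookkeeping.

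First I would record monotonicity. If $Z_1\subseteq Z_2$, then every admissible covering family $\{B_{n_i}(x_i,\mathrm{e}^{-n_i\epsilon})\}$ of $Z_2$ (all $n_i\geq n$) also covers $Z_1$, so $M_{f_{1,\infty}}(n,\alpha,\epsilon,Z_1)\leq M_{f_{1,\infty}}(n,\alpha,\epsilon,Z_2)$; letting $n\to\infty$ gives the same inequality for $M_{f_{1,\infty}}(\alpha,\epsilon,\cdot)$, and since $\alpha\mapsto M_{f_{1,\infty}}(\alpha,\epsilon,W)$ is nonincreasing, also $M_{f_{1,\infty}}(\epsilon,Z_1)\leq M_{f_{1,\infty}}(\epsilon,Z_2)$, hence (as $\epsilon\to 0$) item~2). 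The same monotonicity, applied to each $Z_i\subseteq Z$, gives the inequality ``$\geq$'' in item~3), and correspondingly ``$\geq$'' in item~4). For item~1), I would fix a second compatible metric $d'$ on $X$; by compactness $d$ and $d'$ are uniformly equivalent, so the neutralized Bowen balls built from the two metrics are comparable up to a radius distortion that is absorbed when one passes to $\lim_{\epsilon\to 0}$, exactly as for the classical Bowen topological entropy of NDSs in \cite{JNS0000}; since items 2)--4) never refer to the metric, this is the only place it intervenes.

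The substantive point is ``$\leq$'' in item~3), i.e. countable subadditivity of $M_{f_{1,\infty}}(\epsilon,\cdot)$ \emph{before} taking $\epsilon\to 0$. Set $s:=\sup_i M_{f_{1,\infty}}(\epsilon,Z_i)$ and assume $s<\infty$. Fix $\alpha>s$ and $\delta>0$. For every $i$ we have $\alpha>M_{f_{1,\infty}}(\epsilon,Z_i)$, hence $M_{f_{1,\infty}}(\alpha,\epsilon,Z_i)=0$; since $n\mapsto M_{f_{1,\infty}}(n,\alpha,\epsilon,Z_i)$ is nonnegative, nondecreasing, and tends to $0$, it vanishes identically, so for each $n$ there is a countable family $\{B_{n_j}(x_j,\mathrm{e}^{-n_j\epsilon})\}_{j\in J_i}$ covering $Z_i$ with $n_j\geq n$ and $\sum_{j\in J_i}\mathrm{e}^{-\alpha n_j}<\delta\,2^{-i}$. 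The union over $i$ of these families is a countable cover of $Z=\bigcup_i Z_i$ by neutralized Bowen balls, all of order $\geq n$, of total weight $<\sum_i\delta\,2^{-i}=\delta$, so $M_{f_{1,\infty}}(n,\alpha,\epsilon,Z)<\delta$. As $n$ and $\delta$ were arbitrary, $M_{f_{1,\infty}}(\alpha,\epsilon,Z)=0$, hence $M_{f_{1,\infty}}(\epsilon,Z)\leq\alpha$; letting $\alpha\downarrow s$ gives $M_{f_{1,\infty}}(\epsilon,Z)\leq s$, completing item~3).

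Finally, for item~4): applying item~3) to a finite union yields $M_{f_{1,\infty}}(\epsilon,Z)=\max_i M_{f_{1,\infty}}(\epsilon,Z_i)$ for every $\epsilon>0$; since $\epsilon\mapsto M_{f_{1,\infty}}(\epsilon,W)$ is monotone (a smaller $\epsilon$ enlarges the neutralized balls and can only lower the covering sums), $\lim_{\epsilon\to 0}$ coincides with $\inf_{\epsilon>0}$, and this infimum commutes with a maximum over a finite index set, giving $h_{top}^{NB}(f_{1,\infty},Z)=\max_i h_{top}^{NB}(f_{1,\infty},Z_i)$. I expect the only delicate point to be keeping the quantifiers straight in item~3): one must choose, for a \emph{fixed} $n$, a cover of each $Z_i$ all of whose balls have order $\geq n$, so that the pooled family is still admissible for $M_{f_{1,\infty}}(n,\alpha,\epsilon,Z)$. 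Everything else is monotonicity together with the Carath\'eodory dichotomy already recorded in the excerpt.
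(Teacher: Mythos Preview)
Your arguments for items 2)--4) are correct and are exactly what the paper means by ``direct consequence of the definition'': monotonicity and countable subadditivity of the outer measures $Z\mapsto M_{f_{1,\infty}}(n,\alpha,\epsilon,Z)$, pushed through the Carath\'eodory threshold. Your care in item~3) to fix $n$ \emph{before} choosing the covers of the $Z_i$, and your observation that $M_{f_{1,\infty}}(n,\alpha,\epsilon,Z_i)$ is nondecreasing in $n$ with limit $0$ (hence identically $0$), are precisely the points that make the argument work. The deduction of item~4) from item~3) via commutation of $\inf_{\epsilon>0}$ with a finite $\max$ is also clean.

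The gap is in item~1). Your claim that the metric independence goes ``exactly as for the classical Bowen topological entropy'' elides a real difference between the two settings. In the classical case the radius $\epsilon$ is \emph{fixed}, and uniform equivalence of two compatible metrics $d_1,d_2$ yields a single $\delta=\delta(\epsilon)$ with $B_n^{d_1}(x,\delta)\subseteq B_n^{d_2}(x,\epsilon)$ for every $n$; the $\epsilon\to 0$ limit then absorbs the distortion. In the neutralized case the radius $e^{-n\epsilon}$ \emph{shrinks with $n$}, so what one actually needs is a comparison of the form ``$d_1(x,y)<e^{-m\delta}$ implies $d_2(x,y)<e^{-m\epsilon}$'' valid for all large $m$, and mere uniform continuity of the identity map $(X,d_1)\to(X,d_2)$ does not supply this uniformly in $m$. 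The paper does not appeal to the classical case: it fixes $\epsilon>0$ and $0<\delta<\epsilon$, and for each $n$ produces an $n_0=n_0(n,\epsilon,\delta)\geq n$ with $d_1<e^{-n_0\delta}\Rightarrow d_2<e^{-n\epsilon}$, then compares $M^{d_2}_{f_{1,\infty}}(n,\alpha,\epsilon,Z)$ with $M^{d_1}_{f_{1,\infty}}(n_0,\alpha,\delta,Z)$ before letting $\epsilon\to 0$. At a minimum your write-up should reproduce this $n$-dependent comparison rather than invoke the classical argument; and you should also verify carefully that the paper's inequality $M^{d_2}_{f_{1,\infty}}(n,\alpha,\epsilon,Z)\leq M^{d_1}_{f_{1,\infty}}(n_0,\alpha,\delta,Z)$ really follows from the stated implication, since a cover on the right uses balls $B^{d_1}_{n_i}(x_i,e^{-n_i\delta})$ with \emph{varying} $n_i\geq n_0$, and one needs the implication at level $n_i$ (not just at $n_0$) to land inside $B^{d_2}_{n_i}(x_i,e^{-n_i\epsilon})$.
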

\begin{proof}
We only show the first statement, because the other statements are a direct consequence of the definition. Let $d_{1}$, $d_{2}$ be two compatible metrics on $X$. Then for every $\epsilon^{\prime}>0$ there is $\delta^{\prime}>0$ such that for all $x,y\in X$, $d_{1}(x,y)<\delta^{\prime}$ implies $d_{2}(x,y)<\epsilon^{\prime}$. Now, fix $\epsilon>0$ and let $0<\delta<\epsilon$. For every $n\geq 1$, there is $n_{0}\geq n$ (depends only on $\epsilon$, $\delta$, and $n$) so that for all $x,y\in X$, $d_{1}(x,y)<\text{e}^{-n_{0}\delta}$ implies $d_{2}(x,y)<\text{e}^{-n\epsilon}$. Hence $M_{f_{1,\infty}}^{d_{2}}(n,\alpha,\epsilon,Z)\leq M_{f_{1,\infty}}^{d_{1}}(n_{0},\alpha,\delta,Z)\leq M_{f_{1,\infty}}^{d_{1}}(\alpha,\delta,Z)$ and so $M_{f_{1,\infty}}^{d_{2}}(\alpha,\epsilon,Z)\leq M_{f_{1,\infty}}^{d_{1}}(\alpha,\delta,Z)$. Consequently, $M_{f_{1,\infty}}^{d_{2}}(\epsilon,Z)\leq M_{f_{1,\infty}}^{d_{1}}(\delta,Z)$. As $\epsilon\to 0$, one has $h_{top}^{NB}(f_{1,\infty},d_{2},Z)\leq h_{top}^{NB}(f_{1,\infty},d_{1},Z)$. Now, by exchanging the role of $d_{1}$ and $d_{2}$ we get the converse inequality. 
\end{proof}
%%%%%%%%%%%%%%%%%%%%%%%%%%%%%%%%%%%%%%%%%%%%%%%%%%
%%%%%%%%%%%%%%%%%%%%%%%%%%%%%%%%%%%%%%%%%%%%%%%%%%
\subsection{Neutralized weighted Bowen topological entropy}
Let $(X, f_{1,\infty})$ be an NDS on a compact metric space $(X,d)$ and $Z$ be a non-empty subset of $X$. For any $n\in\mathbb{N}$, $\alpha\in\mathbb{R}$, $\epsilon>0$, and any bounded function $g:X\to\mathbb{R}$, define
\begin{equation*}
\mathcal{W}_{f_{1,\infty}}(n,\alpha,\epsilon,g):=\inf\bigg\{\sum_{i\in I}c_{i}\text{e}^{-\alpha n_{i}}\bigg\},
\end{equation*}
where the infimum is taken over all finite or countable families
$\{(B_{n_{i}}(x_{i},\text{e}^{-n_{i}\epsilon}),c_{i})\}_{i\in I}$ such that
$0<c_{i}<\infty$, $x_{i}\in X$, $n_{i}\geq n$ for all $i$, and $\sum_{i\in I}c_{i}\chi_{B_{n_{i}}(x_{i},\text{e}^{-n_{i}\epsilon})}\geq g$,
where $\chi_{B}$ denotes the characteristic function of subset $B$ of $X$. Now, set
\begin{equation*}
\mathcal{W}_{f_{1,\infty}}(n,\alpha,\epsilon,Z):=\mathcal{W}_{f_{1,\infty}}(n,\alpha,\epsilon,\chi_{Z}).
\end{equation*}
The quantity $\mathcal{W}_{f_{1,\infty}}(n,\alpha,\epsilon,Z)$ does not decrease as $n$ increases, hence the following limit exists
\begin{equation*}
\mathcal{W}_{f_{1,\infty}}(\alpha,\epsilon,Z):=\lim_{n\to\infty}\mathcal{W}_{f_{1,\infty}}(n,\alpha,\epsilon,Z).
\end{equation*}
By the construction of Carath\'{e}odory dimension characteristics (see \cite{PYB}), when $\alpha$ goes from
$-\infty$ to $\infty$, the quantity $\mathcal{W}_{f_{1,\infty}}(\alpha,\epsilon,Z)$ jumps from $\infty$ to $0$ at a unique critical value. Hence we can define the number
\begin{equation*} 
\mathcal{W}_{f_{1,\infty}}(\epsilon,Z):=\sup\{\alpha:\mathcal{W}_{f_{1,\infty}}(\alpha,\epsilon,Z)=\infty\}=\inf\{\alpha:\mathcal{W}_{f_{1,\infty}}(\alpha,\epsilon,Z)=0\}.
\end{equation*}
%%%%%%%%%%%%%%%%%%%%%%%%%%%%%%%%%%%%%%%%%%%%%%%%%%
\begin{definition}[Neutralized weighted Bowen topological entropy]
Let $(X, f_{1,\infty})$ be an NDS on a compact metric space $(X,d)$ and $Z$ be a non-empty subset of $X$. Then, the \emph{neutralized weighted Bowen topological entropy} of NDS $(X,f_{1,\infty})$ on the set $Z$ is defined as
\begin{equation*} 
h_{top}^{NWB}(f_{1,\infty},Z):=\lim_{\epsilon\to 0}\mathcal{W}_{f_{1,\infty}}(\epsilon,Z).
\end{equation*}
\end{definition}
%%%%%%%%%%%%%%%%%%%%%%%%%%%%%%%%%%%%%%%%%%%%%%%%%%
In what follows, as the role of Kolmogorov-Sinai entropy played in the classical variational principle of topological entropy of autonomous dynamical systems (see \cite{PW}), we define the lower neutralized Brin-Katok's local entropy and the neutralized Katok's entropy of Borel probability measures for NDSs, because we need them to establish the variational principle for neutralized Bowen topological entropy and neutralized weighted Bowen topological entropy on non-empty compact subsets of NDSs.
%%%%%%%%%%%%%%%%%%%%%%%%%%%%%%%%%%%%%%%%%%%%%%%%%% 
%%%%%%%%%%%%%%%%%%%%%%%%%%%%%%%%%%%%%%%%%%%%%%%%%%
\subsection{Lower neutralized Brin-Katok's local entropy}
Let $(X, f_{1,\infty})$ be an NDS on a compact metric space $(X,d)$, and let $\mathcal{M}(X)$ denote the set of Borel probability measures on $X$. By following the idea of \cite{BMKA,DFWH,JNS0000}, one can employ the neutralized Bowen open balls to define the local lower neutralized Brin-Katok's entropy for NDSs. For $\mu\in\mathcal{M}(X)$, $\epsilon>0$, and $x\in X$, we define
\begin{equation*}
\underline{h}_{\mu}^{NBK}(f_{1,\infty},\epsilon,x):=\liminf_{n\to\infty}\dfrac{-\log\mu(B_{n}(x,\text{e}^{-n\epsilon}))}{n}
\end{equation*}
and
\begin{equation*}
\underline{h}_{\mu}^{NBK}(f_{1,\infty},\epsilon):=\int\underline{h}_{\mu}^{NBK}(f_{1,\infty},\epsilon,x) d\mu.
\end{equation*}
Then the \emph{lower neutralized Brin-Katok's local entropy} of $\mu$ is defined as
\begin{equation*}
\underline{h}_{\mu}^{NBK}(f_{1,\infty}):=\lim_{\epsilon\to 0}\underline{h}_{\mu}^{NBK}(f_{1,\infty},\epsilon).
\end{equation*}
%%%%%%%%%%%%%%%%%%%%%%%%%%%%%%%%%%%%%%%%%%%%%%%%%%
\begin{remark}
Note that $\underline{h}_{\mu}^{NBK}(f_{1,\infty},\epsilon,x)$ is integrable and so $\underline{h}_{\mu}^{NBK}(f_{1,\infty},\epsilon)$ is well-defined. Indeed, for every $\alpha>0$ the set $\{x\in X: \mu(B_{n}(x,\text{e}^{-n\epsilon}))>\alpha\}$ is open, that implies the mappings $x\in X\mapsto\mu(B_{n}(x,\text{e}^{-n\epsilon}))$ and $x\in X\mapsto\underline{h}_{\mu}^{NBK}(f_{1,\infty},\epsilon,x)$ are measurable.
\end{remark}
%%%%%%%%%%%%%%%%%%%%%%%%%%%%%%%%%%%%%%%%%%%%%%%%%% 
%%%%%%%%%%%%%%%%%%%%%%%%%%%%%%%%%%%%%%%%%%%%%%%%%%
\subsection{Neutralized Katok's entropy.}
Let $(X, f_{1,\infty})$ be an NDS on a compact metric space $(X,d)$. Then, the neutralized Katok's entropy of $(X, f_{1,\infty})$ can be defined similarly to the classical Katok's entropy defined by spanning sets and separated sets \cite{KA}. However, to pursue a variational principle for $(X, f_{1,\infty})$ we need to define neutralized Katok's entropy of Borel probability measures by Carath\'{e}odory Pesin structure \cite{PYB}.

Let $n\in\mathbb{N}$, $\epsilon>0$, $\alpha\in\mathbb{R}$, $\mu\in\mathcal{M}(X)$, and $0<\delta<1$. Put 
\begin{equation*}
\Lambda_{f_{1,\infty}}(n,\alpha,\epsilon,\mu,\delta):=\inf\bigg\{\sum_{i\in I}\text{e}^{-\alpha n_{i}}\bigg\},
\end{equation*}
where the infimum is taken over all finite or countable collections $\{B_{n_{i}}(x_{i},\text{e}^{-n_{i}\epsilon})\}_{i\in I}$ such that $x_{i}\in X$, $n_{i}\geq n$, and $\mu(\bigcup_{i\in I}B_{n_{i}}(x_{i},\text{e}^{-n_{i}\epsilon}))>1-\delta$. The quantity $\Lambda_{f_{1,\infty}}(n,\alpha,\epsilon,\mu,\delta)$ does not
decrease as $n$ increases, hence the following limit exists
\begin{equation*}
\Lambda_{f_{1,\infty}}(\alpha,\epsilon,\mu,\delta):=\lim_{n\to\infty}\Lambda_{f_{1,\infty}}(n,\alpha,\epsilon,\mu,\delta).
\end{equation*}
By the construction of Carath\'{e}odory dimension characteristics (see \cite{PYB}), when $\alpha$ goes from
$-\infty$ to $\infty$, the quantity $\Lambda_{f_{1,\infty}}(\alpha,\epsilon,\mu,\delta)$ jumps from $\infty$ to $0$ at a unique critical value. Hence we can define the number
\begin{equation*}
\Lambda_{f_{1,\infty}}(\epsilon,\mu,\delta):=\sup\{\alpha:\Lambda_{f_{1,\infty}}(\alpha,\epsilon,\mu,\delta)=\infty\}=\inf\{\alpha:\Lambda_{f_{1,\infty}}(\alpha,\epsilon,\mu,\delta)=0\}.
\end{equation*}
Put $h_{\mu}^{NK}(f_{1,\infty},\epsilon):=\lim_{\delta\to 0}\Lambda_{f_{1,\infty}}(\epsilon,\mu,\delta)$. Then,
the \emph{neutralized Katok's entropy} of $\mu$ is defined as
\begin{equation*}
h_{\mu}^{NK}(f_{1,\infty}):=\lim_{\epsilon\to 0}h_{\mu}^{NK}(f_{1,\infty},\epsilon).
\end{equation*}
%%%%%%%%%%%%%%%%%%%%%%%%%%%%%%%%%%%%%%%%%%%%%%%%%%

The following proposition presents some basic properties related to lower neutralized Brin-Katok's local entropy and the neutralized Katok's entropy of Borel probability measures for NDSs.
\begin{proposition}\label{propjj}
Let $(X, f_{1,\infty})$ be an NDS on a compact metric space $(X,d)$ and $\mu\in\mathcal{M}(X)$.
Then, for every $\epsilon>0$, one has $\underline{h}_{\mu}^{NBK}(f_{1,\infty},\dfrac{\epsilon}{2})\leq h_{\mu}^{NK}(f_{1,\infty},\epsilon)$.
\end{proposition}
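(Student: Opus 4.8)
The plan is to reduce the statement to a single uniform estimate and then pass to the limit. Since the inequality is trivial when $h_{\mu}^{NK}(f_{1,\infty},\epsilon)=+\infty$, assume it is finite and fix any real $s'>h_{\mu}^{NK}(f_{1,\infty},\epsilon)$; it then suffices to prove $\underline{h}_{\mu}^{NBK}(f_{1,\infty},\epsilon/2)\leq s'$ and afterwards let $s'\downarrow h_{\mu}^{NK}(f_{1,\infty},\epsilon)$. First I would record two elementary monotonicities inherent in the definitions: the map $\delta\mapsto\Lambda_{f_{1,\infty}}(\epsilon,\mu,\delta)$ is non-increasing, so $h_{\mu}^{NK}(f_{1,\infty},\epsilon)=\sup_{0<\delta<1}\Lambda_{f_{1,\infty}}(\epsilon,\mu,\delta)$ and hence $s'>\Lambda_{f_{1,\infty}}(\epsilon,\mu,\delta)$ for every $\delta\in(0,1)$; consequently $\Lambda_{f_{1,\infty}}(s',\epsilon,\mu,\delta)=0$, and since $m\mapsto\Lambda_{f_{1,\infty}}(m,s',\epsilon,\mu,\delta)$ is non-decreasing with limit $0$, we get $\Lambda_{f_{1,\infty}}(m,s',\epsilon,\mu,\delta)=0$ for every $m\in\mathbb{N}$ and every $\delta\in(0,1)$. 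This is exactly the freedom to choose arbitrarily economical neutralized covers at arbitrarily large scales.

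The geometric heart of the argument is the inclusion that converts the radius $\text{e}^{-n\epsilon}$ into $\text{e}^{-n\epsilon/2}$: fixing $N_{0}\in\mathbb{N}$ with $2\text{e}^{-n\epsilon}\leq\text{e}^{-n\epsilon/2}$ for all $n\geq N_{0}$ (any $N_{0}\geq 2\epsilon^{-1}\log 2$ works), the triangle inequality for the metric $d_{n}$ gives $B_{n}(x,\text{e}^{-n\epsilon})\subseteq B_{n}(y,\text{e}^{-n\epsilon/2})$ whenever $n\geq N_{0}$ and $y\in B_{n}(x,\text{e}^{-n\epsilon})$. Now fix $\delta\in(0,1)$ and $N\in\mathbb{N}$, put $m:=\max\{N,N_{0}\}$ and
\begin{equation*}
D_{N}:=\{x\in X:\mu(B_{n}(x,\text{e}^{-n\epsilon/2}))<\text{e}^{-s'n}\ \text{for all}\ n\geq N\},
\end{equation*}
which is Borel by the measurability noted in the remark following the definition of $\underline{h}_{\mu}^{NBK}$. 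Given $\eta>0$, use $\Lambda_{f_{1,\infty}}(m,s',\epsilon,\mu,\delta)=0$ to pick a finite or countable family $\{B_{m_{j}}(x_{j},\text{e}^{-m_{j}\epsilon})\}_{j\in J}$ with $m_{j}\geq m$, $\mu\big(\bigcup_{j}B_{m_{j}}(x_{j},\text{e}^{-m_{j}\epsilon})\big)>1-\delta$ and $\sum_{j}\text{e}^{-s'm_{j}}<\eta$. For each $j$ with $B_{m_{j}}(x_{j},\text{e}^{-m_{j}\epsilon})\cap D_{N}\neq\emptyset$ choose $y_{j}$ in that intersection; since $m_{j}\geq N_{0}$ and $m_{j}\geq N$, the inclusion above together with $y_{j}\in D_{N}$ yields $\mu(B_{m_{j}}(x_{j},\text{e}^{-m_{j}\epsilon}))\leq\mu(B_{m_{j}}(y_{j},\text{e}^{-m_{j}\epsilon/2}))<\text{e}^{-s'm_{j}}$. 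Splitting $D_{N}$ into the part covered by the family and the rest,
\begin{equation*}
\mu(D_{N})\leq\Big(1-\mu\big({\textstyle\bigcup_{j}}B_{m_{j}}(x_{j},\text{e}^{-m_{j}\epsilon})\big)\Big)+\sum_{j}\mu(B_{m_{j}}(x_{j},\text{e}^{-m_{j}\epsilon}))<\delta+\sum_{j}\text{e}^{-s'm_{j}}<\delta+\eta,
\end{equation*}
so letting $\eta\to 0$ gives $\mu(D_{N})\leq\delta$. As $N\mapsto D_{N}$ is increasing and $\{x:\underline{h}_{\mu}^{NBK}(f_{1,\infty},\epsilon/2,x)>s'\}\subseteq\bigcup_{N}D_{N}$, we conclude $\mu\big(\{x:\underline{h}_{\mu}^{NBK}(f_{1,\infty},\epsilon/2,x)>s'\}\big)\leq\delta$.

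Finally, for every $M>0$, splitting the integral over $\{\underline{h}_{\mu}^{NBK}(f_{1,\infty},\epsilon/2,\cdot)>s'\}$ and its complement gives $\int\min\{\underline{h}_{\mu}^{NBK}(f_{1,\infty},\epsilon/2,x),M\}\,d\mu(x)\leq s'+M\delta$; since $s'$ and $M$ are fixed while $\delta\in(0,1)$ is arbitrary, letting $\delta\to 0$ yields $\int\min\{\underline{h}_{\mu}^{NBK}(f_{1,\infty},\epsilon/2,x),M\}\,d\mu(x)\leq s'$, and then monotone convergence as $M\to\infty$ gives $\underline{h}_{\mu}^{NBK}(f_{1,\infty},\epsilon/2)\leq s'$; letting $s'\downarrow h_{\mu}^{NK}(f_{1,\infty},\epsilon)$ completes the proof. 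The main obstacle — and the reason the statement must trade a factor $2$ in $\epsilon$ — is that a neutralized Katok cover is only required to capture $\mu$-mass $1-\delta$ and its balls are centred at arbitrary points rather than at $\mu$-typical ones; the doubling inclusion re-centres each such ball at a point of $D_{N}$ at the cost of enlarging the radius from $\text{e}^{-n\epsilon}$ to $\text{e}^{-n\epsilon/2}$, while the residual $\delta$-error is harmless precisely because $s'$ can be chosen to work simultaneously for all $\delta$.
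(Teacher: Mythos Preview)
Your proof is correct. It shares with the paper the same geometric core: the re-centring inclusion $B_{n}(x,\text{e}^{-n\epsilon})\subseteq B_{n}(y,\text{e}^{-n\epsilon/2})$ for $y\in B_{n}(x,\text{e}^{-n\epsilon})$ and $n$ large, together with the level sets $D_{N}$ (the paper's $E_{n}$) on which the neutralized Brin--Katok bound $\mu(B_{m}(x,\text{e}^{-m\epsilon/2}))<\text{e}^{-sm}$ holds for all $m\geq N$.

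The direction of the argument, however, is reversed. The paper argues contrapositively: starting from $\alpha<\underline{h}_{\mu}^{NBK}(f_{1,\infty},\epsilon/2)$, it fixes one $n_{0}$ with $\mu(E_{n_{0}})>0$, sets $\delta_{0}=\tfrac{1}{2}\mu(E_{n_{0}})$, and shows that \emph{every} admissible cover at level $\delta_{0}$ satisfies $\sum_{i}\text{e}^{-\alpha n_{i}}\geq\delta_{0}$, which immediately forces $\Lambda_{f_{1,\infty}}(\epsilon,\mu,\delta_{0})\geq\alpha$. You instead start from $s'>h_{\mu}^{NK}(f_{1,\infty},\epsilon)$, exploit that $\Lambda_{f_{1,\infty}}(m,s',\epsilon,\mu,\delta)=0$ for \emph{all} $m$ and \emph{all} $\delta$, and use arbitrarily economical covers to squeeze $\mu(D_{N})\leq\delta$, hence $\underline{h}_{\mu}^{NBK}(f_{1,\infty},\epsilon/2,\cdot)\leq s'$ almost everywhere. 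The paper's route is shorter (no auxiliary $\eta$ or $M$-truncation is needed), while your route makes explicit that the set $\{\underline{h}_{\mu}^{NBK}(f_{1,\infty},\epsilon/2,\cdot)>s'\}$ is $\mu$-null, a pointwise statement slightly stronger than what the inequality of integrals requires.
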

\begin{proof}
Let $\underline{h}_{\mu}^{NBK}(f_{1,\infty},\dfrac{\epsilon}{2})>\alpha>0$. For each $n\in\mathbb{N}$, set
\begin{equation*}
E_{n}:=\big\{x\in X: \mu(B_{m}(x,\text{e}^{\frac{-m\epsilon}{2}}))<\text{e}^{-m\alpha}\ \text{for all}\ m\geq n\big\}.
\end{equation*}
Then, there is $n_{0}\in\mathbb{N}$ such that $\text{e}^{\frac{n_{0}\epsilon}{2}}>2$ and $\mu(E_{n_{0}})>0$. Put $\delta_{0}:=\frac{1}{2}\mu(E_{n_{0}})$. Let $\{B_{n_{i}}(x_{i},\text{e}^{-n_{i}\epsilon})\}_{i\in I}$ be a finite or countable collection such that $x_{i}\in X$, $n_{i}\geq n_{0}$, and $\mu(\bigcup_{i\in I}B_{n_{i}}(x_{i},\text{e}^{-n_{i}\epsilon}))>1-\delta_{0}$. Then $\mu(E_{n_{0}}\cap\bigcup_{i\in I}B_{n_{i}}(x_{i},\text{e}^{-n_{i}\epsilon}))\geq\delta_{0}$. Put $I_{1}:=\{i\in I: E_{n_{0}}\cap B_{n_{i}}(x_{i},\text{e}^{-n_{i}\epsilon})\neq\emptyset\}$. For every $i\in I_{1}$, take $y_{i}\in E_{n_{0}}\cap B_{n_{i}}(x_{i},\text{e}^{-n_{i}\epsilon})$ such that
\begin{equation*}
E_{n_{0}}\cap B_{n_{i}}(x_{i},\text{e}^{-n_{i}\epsilon})\subset B_{n_{i}}(y_{i},2\text{e}^{-n_{i}\epsilon})\subset B_{n_{i}}(y_{i},\text{e}^{\frac{-n_{i}\epsilon}{2}}).
\end{equation*}
Thus,
\begin{equation*}
\sum_{i\in I}\text{e}^{-\alpha n_{i}}\geq\sum_{i\in I_{1}}\text{e}^{-\alpha n_{i}}\geq\sum_{i\in I_{1}}\mu(B_{n_{i}}(y_{i},\text{e}^{\frac{-n_{i}\epsilon}{2}}))\geq\delta_{0},
\end{equation*}
that implies $\Lambda_{f_{1,\infty}}(\alpha,\epsilon,\mu,\delta_{0})\geq\Lambda_{f_{1,\infty}}(n_{0},\alpha,\epsilon,\mu,\delta_{0})$ and hence $\Lambda_{f_{1,\infty}}(\epsilon,\mu,\delta_{0})\geq\alpha$. Consequently, $h_{\mu}^{NK}(f_{1,\infty},\epsilon)\geq\alpha$. This finishes the proof as $\alpha\to \underline{h}_{\mu}^{NBK}(f_{1,\infty},\dfrac{\epsilon}{2})$.
\end{proof}
%%%%%%%%%%%%%%%%%%%%%%%%%%%%%%%%%%%%%%%%%%%%%%%%%%
%%%%%%%%%%%%%%%%%%%%%%%%%%%%%%%%%%%%%%%%%%%%%%%%%%
\section{Proof of main results}\label{section3}
In this section, we prove Theorem A. To do so, we use the idea of geometric measure theory \cite{MP} and the works of Feng and Huang \cite{DFWH} and Nazarian Sarkooh \cite{JNS0000} to show that the neutralized Bowen topological entropy and the neutralized weighted Bowen topological entropy of NDSs are equivalent. This allows us to define a positive linear functional $L$ on $\mathcal{C}(X,\mathbb{R})$ by applying Frostman's lemma which is an analog of Feng and Huang's approximation, where $\mathcal{C}(X,\mathbb{R})$ denotes the Banach space of all continuous real-valued functions on $X$ equipped with the supremum norm. Then \emph{Riesz representation theorem} can be applied to produce a Borel probability measure $\mu\in\mathcal{M}(X)$ with $\mu(Z)=1$ so that $M_{f_{1,\infty}}(\epsilon,Z)\leq\underline{h}_{\mu}^{NBK}(f_{1,\infty},2\epsilon)$. 

In Theorem \ref{propj}, we prove that the neutralized Bowen topological entropy is equal to the neutralized weighted Bowen topological entropy, i.e., $h_{top}^{NB}(f_{1,\infty},Z)=h_{top}^{NWB}(f_{1,\infty},Z)$. To do this, we need the following lemma \cite[Lemma 6.3]{TW} which is called the \emph{Vitali covering lemma} (see \cite[Theorem 2.1]{MP}).
%%%%%%%%%%%%%%%%%%%%%%%%%%%%%%%%%%%%%%%%%%%%%%%%%%
\begin{lemma}\label{lemmaj}
Let $(X,d)$ be a compact metric space, $r>0$, and $\{B(x_{i},r)\}_{i\in I}$ be a family of open balls in $X$. Set
$I(i):=\{j\in I: B(x_{i},r)\cap B(x_{j},r)\neq\emptyset\}$. Then there exists a finite subset $J\subset I$ so that for any $i,j\in J$ with $i\neq j$, $I(i)\cap I(j)\neq\emptyset$ and $\bigcup_{i\in I}B(x_{i},r)\subseteq\bigcup_{j\in J}B(x_{j},5r)$.
\end{lemma}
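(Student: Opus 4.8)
The plan is to run the classical Vitali $5r$-covering argument through a maximality principle, adapted to the present fixed-radius compact setting. Call a subfamily $J'\subseteq I$ \emph{admissible} if $I(i)\cap I(j)=\emptyset$ for all distinct $i,j\in J'$ (keep in mind that $j\in I(j)$ always). I would select a maximal admissible $J$, extract the covering from maximality, and note that the separation part of the statement is built into admissibility, while the $\neq\emptyset$ phenomenon is exactly the maximality obstruction: for every $i\in I$ there will be $j\in J$ with $I(i)\cap I(j)\neq\emptyset$.

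First I would prove that every admissible family $J'$ is finite, with cardinality bounded by a quantity depending only on $(X,d)$ and $r$. If $i\neq j$ lie in $J'$ and $B(x_i,r)\cap B(x_j,r)\neq\emptyset$, then $j\in I(i)$, and since also $j\in I(j)$ we get $j\in I(i)\cap I(j)$, contradicting admissibility; hence the balls $\{B(x_j,r)\}_{j\in J'}$ are pairwise disjoint. If moreover $d(x_i,x_j)<r$ then $x_i\in B(x_i,r)\cap B(x_j,r)$, again impossible, so $d(x_i,x_j)\geq r$ for distinct $i,j\in J'$. Thus $\{x_j:j\in J'\}$ is $r$-separated, and since $X$ is compact (hence totally bounded) it has at most $N_r$ elements, $N_r$ being an $r$-covering number of $X$. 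Therefore an admissible family of maximum cardinality exists; call it $J$. It is finite, it is maximal with respect to inclusion, and by admissibility it satisfies $I(i)\cap I(j)=\emptyset$ for all distinct $i,j\in J$.

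Next I would derive the covering $\bigcup_{i\in I}B(x_i,r)\subseteq\bigcup_{j\in J}B(x_j,5r)$ from maximality. Fix $i\in I$. If $i\in J$ then trivially $B(x_i,r)\subseteq B(x_i,5r)$. If $i\notin J$, then $J\cup\{i\}$ fails to be admissible, and since every pair inside $J$ is admissible the offending pair involves $i$: there is $j\in J$ with $I(i)\cap I(j)\neq\emptyset$. Choose $\ell\in I(i)\cap I(j)$, then $p\in B(x_\ell,r)\cap B(x_i,r)$ and $q\in B(x_\ell,r)\cap B(x_j,r)$; for any $y\in B(x_i,r)$,
\begin{equation*}
d(y,x_j)\leq d(y,x_i)+d(x_i,p)+d(p,x_\ell)+d(x_\ell,q)+d(q,x_j)<5r,
\end{equation*}
so $B(x_i,r)\subseteq B(x_j,5r)$ with $j\in J$. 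Taking the union over $i\in I$ completes the covering, and records along the way that for every $i\in I$ there is $j\in J$ with $I(i)\cap I(j)\neq\emptyset$ (taking $j=i$ when $i\in J$).

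I do not expect a genuine obstacle here, since this is the standard $5r$-covering lemma; the two points that need care are (a) the finiteness of admissible families, which is precisely where compactness of $X$ is used and is what allows $J$ to be chosen finite rather than merely countable, and (b) the five-term triangle-inequality estimate, which must be set up with the correct inclusions $p,q\in B(x_\ell,r)$, $p\in B(x_i,r)$, $q\in B(x_j,r)$ in order to land exactly at the dilation constant $5$.
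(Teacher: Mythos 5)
Your argument is correct and is the standard maximal-subfamily proof of the $5r$-covering lemma; the paper itself supplies no proof, deferring to \cite[Lemma 6.3]{TW} and \cite[Theorem 2.1]{MP}, so in effect you have filled in the omitted argument in essentially the expected way. Both delicate points are handled properly: compactness gives total boundedness, the centers of an admissible family are $r$-separated (disjointness of the balls follows from admissibility because $j\in I(i)\cap I(j)$ whenever $B(x_i,r)\cap B(x_j,r)\neq\emptyset$), so admissible families have uniformly bounded cardinality and one of maximum cardinality exists; and the five-term triangle inequality through $p\in B(x_i,r)\cap B(x_\ell,r)$ and $q\in B(x_\ell,r)\cap B(x_j,r)$ lands exactly on the dilation constant $5$. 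One discrepancy must be flagged, though it is with the printed statement rather than with your proof: you establish $I(i)\cap I(j)=\emptyset$ for distinct $i,j\in J$, whereas the lemma as stated asserts $I(i)\cap I(j)\neq\emptyset$. The printed version is evidently a typo: read literally it is false (take $X$ to be two points at mutual distance $10$, $r=1$, and $I=\{1,2\}$ indexing the two singleton balls; any $J$ achieving the $5r$-covering must be all of $I$, yet $I(1)\cap I(2)=\emptyset$), and moreover the pairwise disjointness of the sets $I_{m}^{t}(j)$, $j\in J$, is precisely what the application in Theorem~\ref{propj} needs for the step $\sum_{j\in J}\sum_{i\in I_{m}^{t}(j)}c_{i}\leq\sum_{i\in I_{m}^{t}}c_{i}$ (the same sign error recurs there). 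So your version is the intended and correct one; it would be worth noting that your construction also yields, for \emph{every} $i\in I$, \emph{some} $j\in J$ with $I(i)\cap I(j)\neq\emptyset$, which is the true content behind the misprinted clause.
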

%%%%%%%%%%%%%%%%%%%%%%%%%%%%%%%%%%%%%%%%%%%%%%%%%%
\begin{theorem}\label{propj}
Let $(X, f_{1,\infty})$ be an NDS on a compact metric space $(X,d)$ and $Z$ be a non-empty subset of $X$. Then,  for $\epsilon>0$, $\alpha\in\mathbb{R}$, $\theta>0$, and sufficiently large $n$, we have
$M_{f_{1,\infty}}(n,\alpha+\theta,\frac{\epsilon}{2},Z)\leq\mathcal{W}_{f_{1,\infty}}(n,\alpha,\epsilon,Z)\leq M_{f_{1,\infty}}(n,\alpha,\epsilon,Z)$. Consequently, $h_{top}^{NB}(f_{1,\infty},Z)=h_{top}^{NWB}(f_{1,\infty},Z)$.
\end{theorem}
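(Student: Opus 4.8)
The plan is to establish the chain of inequalities
$M_{f_{1,\infty}}(n,\alpha+\theta,\tfrac{\epsilon}{2},Z)\leq\mathcal{W}_{f_{1,\infty}}(n,\alpha,\epsilon,Z)\leq M_{f_{1,\infty}}(n,\alpha,\epsilon,Z)$
for all sufficiently large $n$, and then let $n\to\infty$, pass to the critical values, and finally let $\epsilon\to 0$ to deduce the equality of the two entropies. The right-hand inequality is the easy direction: any cover $\{B_{n_i}(x_i,\text{e}^{-n_i\epsilon})\}_{i\in I}$ of $Z$ witnessing $M_{f_{1,\infty}}(n,\alpha,\epsilon,Z)$ yields a weighted family by attaching the constant weights $c_i=1$; then $\sum_{i\in I}c_i\chi_{B_{n_i}(x_i,\text{e}^{-n_i\epsilon})}\geq\chi_Z$ holds because the balls cover $Z$, so this weighted family is admissible for $\mathcal{W}_{f_{1,\infty}}(n,\alpha,\epsilon,Z)$ and contributes exactly $\sum_i \text{e}^{-\alpha n_i}$. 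Taking infima gives $\mathcal{W}_{f_{1,\infty}}(n,\alpha,\epsilon,Z)\leq M_{f_{1,\infty}}(n,\alpha,\epsilon,Z)$.

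The substantive direction is the left-hand inequality, and the main obstacle is converting an arbitrary weighted family into an honest cover by finitely/countably many neutralized Bowen balls without losing too much in the exponent. I would follow the Feng--Huang scheme. Fix an admissible weighted family $\{(B_{n_i}(x_i,\text{e}^{-n_i\epsilon}),c_i)\}_{i\in I}$ for $\mathcal{W}_{f_{1,\infty}}(n,\alpha,\epsilon,Z)$. For each integer $k\geq n$ let $I_k:=\{i\in I: n_i=k\}$, and for a real parameter $t>1$ stratify $Z$ by the level sets $Z_{k,m}:=\{x\in Z: \sum_{i\in I_k}c_i\chi_{B_{n_i}(x_i,\text{e}^{-n_i\epsilon})}(x)\in (t^{-m-1},t^{-m}]\}$ according to how much mass the weight-$k$ balls pile on $x$; a standard Borel--Cantelli / dyadic counting argument shows $Z=\bigcup_{k,m}Z_{k,m}$ up to the points where the sum is zero (which are not in $Z$ since the family dominates $\chi_Z$). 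On each $Z_{k,m}$ the balls $B_k(x_i,\text{e}^{-k\epsilon})$ with $i\in I_k$ meeting $Z_{k,m}$ form a family of balls of a fixed Bowen radius; here I would invoke the Vitali-type covering Lemma \ref{lemmaj}, applied in the metric $d_{1,k}$ (which is again a metric on $X$), to extract a finite subfamily whose $5$-times enlargements cover all of $Z_{k,m}$ meeting those balls, while the original subfamily is essentially disjoint, so its cardinality is controlled by $t^{m}\sum_{i\in I_k}c_i$ times a constant. The enlarged balls $B_k(y,5\text{e}^{-k\epsilon})$ must then be reabsorbed into neutralized balls of the form $B_k(y,\text{e}^{-k\epsilon/2})$: this is exactly where the radius must be halved, and it works for all $k$ large enough that $5\text{e}^{-k\epsilon}<\text{e}^{-k\epsilon/2}$, i.e. $\text{e}^{k\epsilon/2}>5$ — hence the hypothesis ``sufficiently large $n$''. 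Bookkeeping the resulting cover of $Z_{k,m}$ (and then of $Z$, summing over $k$ and $m$, using $\sum_m t^{-m}<\infty$ to absorb the stratification loss into the $\theta$) yields $M_{f_{1,\infty}}(n,\alpha+\theta,\tfrac{\epsilon}{2},Z)\lesssim \sum_{i\in I}c_i\text{e}^{-\alpha n_i}$, and taking the infimum over admissible families gives the claim.

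With the two-sided inequality in hand, letting $n\to\infty$ gives $M_{f_{1,\infty}}(\alpha+\theta,\tfrac{\epsilon}{2},Z)\leq\mathcal{W}_{f_{1,\infty}}(\alpha,\epsilon,Z)\leq M_{f_{1,\infty}}(\alpha,\epsilon,Z)$. Reading off critical values, the right inequality gives $\mathcal{W}_{f_{1,\infty}}(\epsilon,Z)\leq M_{f_{1,\infty}}(\epsilon,Z)$, and the left one gives $M_{f_{1,\infty}}(\tfrac{\epsilon}{2},Z)-\theta\leq\mathcal{W}_{f_{1,\infty}}(\epsilon,Z)$ for every $\theta>0$, hence $M_{f_{1,\infty}}(\tfrac{\epsilon}{2},Z)\leq\mathcal{W}_{f_{1,\infty}}(\epsilon,Z)\leq M_{f_{1,\infty}}(\epsilon,Z)$. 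Finally, letting $\epsilon\to 0$ squeezes $\lim_{\epsilon\to 0}\mathcal{W}_{f_{1,\infty}}(\epsilon,Z)$ between $\lim_{\epsilon\to 0}M_{f_{1,\infty}}(\tfrac{\epsilon}{2},Z)$ and $\lim_{\epsilon\to 0}M_{f_{1,\infty}}(\epsilon,Z)$, both of which equal $h_{top}^{NB}(f_{1,\infty},Z)$; therefore $h_{top}^{NWB}(f_{1,\infty},Z)=h_{top}^{NB}(f_{1,\infty},Z)$. The one point that needs a little care beyond the autonomous case is that the enlargement/absorption step and the Vitali lemma are applied in the time-$1$ Bowen metric $d_{1,k}$ rather than in $d$ itself, but since $d_{1,k}$ is genuinely a metric on the compact space $X$ and all balls in sight are $d_{1,k}$-balls, Lemma \ref{lemmaj} applies verbatim.
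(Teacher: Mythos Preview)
Your overall strategy coincides with the paper's: the right-hand inequality is immediate with weights $c_i=1$, and for the left-hand one the key moves are exactly Vitali selection in the Bowen metric $d_{1,k}$, the absorption $5\text{e}^{-k\epsilon}\leq\text{e}^{-k\epsilon/2}$ for large $k$, and spending the margin $\theta$ on bookkeeping losses. Your passage from the finite-$n$ inequalities to $h_{top}^{NB}=h_{top}^{NWB}$ is also correct, as is your remark that Lemma~\ref{lemmaj} is being applied in the metric $d_{1,k}$.

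The gap is in the bookkeeping of your dyadic stratification. On $Z_{k,m}$ the Vitali-selected subfamily $J$ obeys $|J|\cdot t^{-m-1}<\sum_{i\in I_k}c_i$ (this uses only the \emph{lower} threshold $t^{-m-1}$; the upper threshold $t^{-m}$ is never used), so its contribution to $M_{f_{1,\infty}}(n,\alpha+\theta,\tfrac{\epsilon}{2},\cdot)$ is at most $t^{m+1}\text{e}^{-k(\alpha+\theta)}\sum_{i\in I_k}c_i$. Summing over $m\geq 0$ produces $\sum_m t^{m+1}=\infty$, not $\sum_m t^{-m}<\infty$; the series you invoke sits on the wrong side and the double sum diverges. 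The paper sidesteps this by using a \emph{single} threshold per order rather than dyadic layers: for $0<t<1$ and each order $m\geq n$ set $Z_{m,t}:=\{x\in Z:\sum_{i\in I_m}c_i\chi_{B_m(x_i,\text{e}^{-m\epsilon})}(x)>t\}$. Vitali gives $|J|\leq\tfrac{1}{t}\sum_{i\in I_m}c_i$, and since one also requires $m^2\text{e}^{-m\theta}<1$ for $m\geq n$, this yields $M_{f_{1,\infty}}(n,\alpha+\theta,\tfrac{\epsilon}{2},Z_{m,t})\leq\tfrac{1}{m^2 t}\sum_{i\in I_m}c_i\text{e}^{-m\alpha}$. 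The recombination step is the pigeonhole observation $Z=\bigcup_{m\geq n}Z_{m,\,t/m^2}$ (if every order contributed at most $t/m^2$ at $x$, the total would be $\sum_{m\geq n}t/m^2<t<1$, contradicting $\sum_i c_i\chi_{B_{n_i}(x_i,\text{e}^{-n_i\epsilon})}\geq\chi_Z$); substituting $t\mapsto t/m^2$ and summing over $m$ gives $M_{f_{1,\infty}}(n,\alpha+\theta,\tfrac{\epsilon}{2},Z)\leq\tfrac{1}{t}\sum_{i\in I}c_i\text{e}^{-n_i\alpha}$, then let $t\to 1$. Replace your dyadic layers with this single-threshold-plus-$1/m^2$ device and your argument closes.
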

\begin{proof}
The inequality $\mathcal{W}_{f_{1,\infty}}(n,\alpha,\epsilon,Z)\leq M_{f_{1,\infty}}(n,\alpha,\epsilon,Z)$ follows by definitions. We show $M_{f_{1,\infty}}(n,\alpha+\theta,\frac{\epsilon}{2},Z)\leq\mathcal{W}_{f_{1,\infty}}(n,\alpha,\epsilon,Z)$ by modifying the proof of \cite[Proposition 6.4]{TW}.

Let $n$ be a sufficiently large integer so that $\text{e}^{\frac{m\epsilon}{2}}>5$ and $\frac{m^{2}}{\text{e}^{m\theta}}<1$ for all $m\geq n$. Let $\{(B_{n_{i}}(x_{i},\text{e}^{-n_{i}\epsilon}),c_{i})\}_{i\in I}$ with $0<c_{i}<\infty$, $x_{i}\in X$, and $n_{i}\geq n$ for all $i$, be a finite or countable family satisfying
$\sum_{i\in I}c_{i}\chi_{B_{n_{i}}(x_{i},\text{e}^{-n_{i}\epsilon})}\geq\chi_{Z}$. For $m\in\mathbb{N}$, put
$I_{m}:=\{i\in I: n_{i}=m\}$. Let $t>0$ and $m\geq n$. Set
\begin{equation*}
Z_{m,t}:=\{x\in Z: \sum_{i\in I_{m}}c_{i}\chi_{B_{m}(x_{i},\text{e}^{-m\epsilon})}(x)>t\}
\end{equation*}
and
\begin{equation*}
I_{m}^{t}:=\{i\in I_{m}: B_{m}(x_{i},\text{e}^{-m\epsilon})\cap Z_{m,t}\neq\emptyset\}.
\end{equation*}
Then, $Z_{m,t}\subset\bigcup_{i\in I_{m}^{t}}B_{m}(x_{i},\text{e}^{-m\epsilon})$. Let $\mathcal{B}:=\{B_{m}(x_{i},\text{e}^{-m\epsilon})\}_{i\in I_{m}^{t}}$. By Lemma \ref{lemmaj}, there is a finite subset $J\subset I_{m}^{t}$ such that
\begin{equation*}
\bigcup_{i\in I_{m}^{t}}B_{m}(x_{i},\text{e}^{-m\epsilon})\subset\bigcup_{j\in J}B_{m}(x_{j},5\text{e}^{-m\epsilon})\subset\bigcup_{j\in J}B_{m}(x_{j},\text{e}^{\frac{-m\epsilon}{2}}),
\end{equation*}
and $I_{m}^{t}(i)\cap I_{m}^{t}(j)\neq\emptyset$ for any $i,j\in J$ with $i\neq j$, where
$I_{m}^{t}(i):=\{j\in I_{m}^{t}: B_{m}(x_{j},\text{e}^{-m\epsilon})\cap B_{m}(x_{i},\text{e}^{-m\epsilon})\neq\emptyset\}$. 

Now, for each $j\in J$, we choose $y_{j}\in B_{m}(x_{j},\text{e}^{-m\epsilon})\cap Z_{m,t}$. Then $\sum_{i\in I_{m}^{t}}c_{i}\chi_{B_{m}(x_{i},\text{e}^{-m\epsilon})}(y_{j})>t$ and hence $\sum_{i\in I_{m}^{t}(j)}c_{i}>t$. Consequently,
\begin{equation*}
|J|<\frac{1}{t}\sum_{j\in J}\sum_{i\in I_{m}^{t}(j)}c_{i}\leq\frac{1}{t}\sum_{i\in I_{m}^{t}}c_{i}.
\end{equation*}
It follows that $M_{f_{1,\infty}}(n,\alpha+\theta,\frac{\epsilon}{2},Z_{m,t})\leq|J|.\text{e}^{-m(\alpha+\theta)}\leq\frac{1}{m^{2}t}\sum_{i\in I_{m}}c_{i}\text{e}^{-m\alpha}$. Note that for any $0<t<1$, $Z=\cup_{m\geq n}Z_{m,\frac{t}{m^{2}}}$, and so  
\begin{equation*}
M_{f_{1,\infty}}(n,\alpha+\theta,\frac{\epsilon}{2},Z)\leq\sum_{m\geq n}M_{f_{1,\infty}}(n,\alpha+\theta,\frac{\epsilon}{2},Z_{m,\frac{t}{m^{2}}})\leq\frac{1}{t}\sum_{i\in I}c_{i}\text{e}^{-n_{i}\alpha}.
\end{equation*}
Hence, as $t\to 1$, we have $M_{f_{1,\infty}}(n,\alpha+\theta,\frac{\epsilon}{2},Z)\leq\sum_{i\in I}c_{i}\text{e}^{-n_{i}\alpha}$. So $M_{f_{1,\infty}}(n,\alpha+\theta,\frac{\epsilon}{2},Z)\leq\mathcal{W}_{f_{1,\infty}}(n,\alpha,\epsilon,Z)$ that implies the desired result.
\end{proof}
%%%%%%%%%%%%%%%%%%%%%%%%%%%%%%%%%%%%%%%%%%%%%%%%%%
To prove Theorem A, we need the following dynamical Frostman's lemma which is an analog of Feng and Huang's approximation. We omit the detail of its proof because the proof of \cite[Lemma 3.4]{DFWH} and \cite[Lemma 4.3]{JNS0000} which are adapted from Howroyd's elegant argument (see \cite[Theorem 2]{HJD} and \cite[Theorem 8.17]{MP}) also works for it if the Bowen open ball $B_{m}(x,\epsilon)$ is replaced with neutralized Bowen open ball $B_{m}(x,\text{e}^{-m\epsilon})$.
%%%%%%%%%%%%%%%%%%%%%%%%%%%%%%%%%%%%%%%%%%%%%%%%%%
\begin{lemma}\label{lemmajjj}
Let $(X, f_{1,\infty})$ be an NDS on a compact metric space $(X,d)$ and $Z$ be a non-empty compact subset of $X$. Let $\alpha\in\mathbb{R}$, $n\in\mathbb{N}$, and $\epsilon>0$. Set $c:=\mathcal{W}_{f_{1,\infty}}(n,\alpha,\epsilon,Z)>0$. Then there is $\mu\in\mathcal{M}(X)$ such that $\mu(Z)=1$ and
\begin{equation*}
\mu(B_{m}(x,\text{e}^{-m\epsilon}))\leq\dfrac{1}{c}\text{e}^{-\alpha m},\ \ \ \forall x\in X,\ m\geq n.
\end{equation*}
\end{lemma}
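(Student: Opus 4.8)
The plan is to follow Howroyd's weighted-covering argument, adapted to neutralized Bowen balls, exactly as in \cite[Lemma 3.4]{DFWH} and \cite[Lemma 4.3]{JNS0000}. The starting point is the observation that since $Z$ is compact and each neutralized Bowen ball $B_m(x,\mathrm{e}^{-m\epsilon})$ is open (the maps $f_n$ are continuous, so $d_m$ is continuous), any cover of $Z$ by such balls has a finite subcover; thus in the definition of $\mathcal{W}_{f_{1,\infty}}(n,\alpha,\epsilon,Z)$ we may restrict to finite weighted families, and $c=\mathcal{W}_{f_{1,\infty}}(n,\alpha,\epsilon,Z)>0$ is a genuine obstruction to covering $Z$ cheaply. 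First I would set up the auxiliary functional: for a finite weighted family $\{(B_{m_i}(x_i,\mathrm{e}^{-m_i\epsilon}),c_i)\}$ whose weighted sum of indicator functions dominates $\chi_Z$, the quantity $\sum_i c_i\mathrm{e}^{-\alpha m_i}$ is bounded below by $c$; dually, one defines for each compact $K\subseteq Z$ the number $\mathcal{W}_{f_{1,\infty}}(n,\alpha,\epsilon,K)$ and checks it is subadditive and (via the finite-subcover remark) lower semicontinuous-type in $K$, which are the hypotheses needed to run Howroyd's lemma.

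The core step is a minimax / Hahn--Banach argument producing a finitely additive, then countably additive, set function. Concretely, one works on the space of continuous functions and considers, for the compact set $Z$, the sublinear functional
\[
p(f):=\inf\Bigl\{\tfrac{1}{c}\sum_{i}c_i\mathrm{e}^{-\alpha m_i}: \sum_i c_i\chi_{B_{m_i}(x_i,\mathrm{e}^{-m_i\epsilon})}\ge f \text{ on } Z\Bigr\}
\]
over finite families with $m_i\ge n$. One shows $p$ is well-defined and subadditive on nonnegative functions, that $p(\chi_Z)\le 1$ by definition of $c$, and — this is where compactness of $Z$ enters again — that $p$ extends to a positive linear functional $L$ on $\mathcal{C}(X,\mathbb{R})$ with $L(\mathbf{1})\le 1$ and $L(f)\ge \int f$-type lower bounds forcing $L(\mathbf{1})=1$ on $Z$. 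The Riesz representation theorem then yields $\mu\in\mathcal{M}(X)$ with $\mu(Z)=1$. Finally, testing $L$ against $\chi_{B_m(x,\mathrm{e}^{-m\epsilon})}$ (approximated from outside by continuous functions, using openness of the ball) gives $\mu(B_m(x,\mathrm{e}^{-m\epsilon}))\le p(\chi_{B_m(x,\mathrm{e}^{-m\epsilon})})\le \tfrac{1}{c}\mathrm{e}^{-\alpha m}$ for every $x\in X$ and $m\ge n$, since the single pair $(B_m(x,\mathrm{e}^{-m\epsilon}),\mathrm{e}^{-\alpha m}\cdot\tfrac{1}{\mathrm{e}^{-\alpha m}})$—more precisely the pair with weight $c_i=1$ and order $m$—is an admissible dominating family for that single indicator.

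I expect the main obstacle to be the same one Howroyd's argument handles: upgrading the finitely additive content obtained from the Hahn--Banach / minimax step to a genuine Borel probability measure, i.e., verifying the countable subadditivity and tightness needed so that the Riesz representation applies and $\mu(Z)=1$ is preserved. This is precisely where compactness of $Z$ and the openness of neutralized Bowen balls are used, and it is the reason the statement requires $Z$ compact. Since replacing $B_m(x,\epsilon)$ by $B_m(x,\mathrm{e}^{-m\epsilon})$ changes only the shape of the covering elements and not the combinatorial or functional-analytic structure of the argument — the balls are still open sets indexed by $(x,m)$ with a prescribed ``cost'' $\mathrm{e}^{-\alpha m}$ — the proofs in \cite{DFWH,JNS0000} transfer verbatim, which is why the detailed verification is omitted here.
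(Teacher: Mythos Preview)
Your proposal is correct and follows essentially the same approach as the paper: the paper itself omits the details and simply remarks that the proofs of \cite[Lemma~3.4]{DFWH} and \cite[Lemma~4.3]{JNS0000} (Howroyd's sublinear functional plus Hahn--Banach plus Riesz) go through verbatim once the usual Bowen balls $B_m(x,\epsilon)$ are replaced by the neutralized balls $B_m(x,\mathrm{e}^{-m\epsilon})$, which is exactly the argument you outline and the transfer principle you invoke in your final paragraph.
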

%%%%%%%%%%%%%%%%%%%%%%%%%%%%%%%%%%%%%%%%%%%%%%%%%%
\textbf{Proof of Theorem A}.
Note that for every $\mu\in\mathcal{M}(X)$ with $\mu(Z)=1$ and $\epsilon>0$, $h_{\mu}^{NK}(f_{1,\infty},\epsilon)\leq M_{f_{1,\infty}}(\epsilon,Z)$. Hence, by Proposition \ref{propjj}, we have
\begin{eqnarray*}
&&
\lim_{\epsilon\to 0}\sup\{\underline{h}_{\mu}^{NBK}(f_{1,\infty},\epsilon): \mu\in\mathcal{M}(X),\mu(Z)=1\}\\
&\leq & \lim_{\epsilon\to 0}\sup\{h_{\mu}^{NK}(f_{1,\infty},\epsilon): \mu\in\mathcal{M}(X),\mu(Z)=1\}\\
&\leq & h_{top}^{NB}(f_{1,\infty},Z).
\end{eqnarray*}
Now, fix $\epsilon>0$ and assume that $M_{f_{1,\infty}}(\epsilon,Z)>\alpha>0$. Then, by Theorem \ref{propj}, one has $\mathcal{W}_{f_{1,\infty}}(\alpha,2\epsilon,Z)>0$. So there is $n\in\mathbb{N}$ such that $c:=\mathcal{W}_{f_{1,\infty}}(n,\alpha,2\epsilon,Z)>0$. By Lemma \ref{lemmajjj}, there is $\mu\in\mathcal{M}(X)$ such that $\mu(Z)=1$ and 
\begin{equation*}
\mu(B_{m}(x,\text{e}^{-2m\epsilon}))\leq\dfrac{1}{c}\text{e}^{-\alpha m},\ \ \ \forall x\in X,\ m\geq n.
\end{equation*}
This implies that $\underline{h}_{\mu}^{NBK}(f_{1,\infty},2\epsilon)\geq\alpha$. Now, as $\alpha\to M_{f_{1,\infty}}(\epsilon,Z)$, we obtain that
\begin{equation*}
M_{f_{1,\infty}}(\epsilon,Z)\leq\underline{h}_{\mu}^{NBK}(f_{1,\infty},2\epsilon)\leq\sup\{\underline{h}_{\mu}^{NBK}(f_{1,\infty},2\epsilon): \mu\in\mathcal{M}(X),\mu(Z)=1\}.
\end{equation*}
The above statement together with Theorem \ref{propj} finishes the proof of Theorem A.
%%%%%%%%%%%%%%%%%%%%%%%%%%%%%%%%%%%%%%%%%%%%%%%%%%
\begin{remark}
Note that, one can not directly exchange the order of $\lim_{\epsilon\to 0}$ and $\sup$ in Theorem A, because of
\begin{equation*}
\underline{h}_{\mu}^{NBK}(f_{1,\infty})=\inf_{\epsilon>0}\underline{h}_{\mu}^{NBK}(f_{1,\infty},\epsilon)\ \text{and}\ h_{\mu}^{NK}(f_{1,\infty})=\inf_{\epsilon>0}h_{\mu}^{NK}(f_{1,\infty},\epsilon).
\end{equation*}
This obstacle coming from definition does not allow us to establish the variational principle for neutralized Bowen topological entropy and neutralized weighted Bowen topological entropy whose form is closer to the Nazarian Sarkooh work \cite[Corollary D]{JNS0000}. Interested readers also can see \cite[Theorem 1.1]{YRCEZX} for analogous problems in autonomous dynamical systems, and \cite[Theorem 1.4]{TW} and \cite[Theorem 1.4]{YRCEZX1} in metric mean dimension theory for an analog problem.
\end{remark}
%%%%%%%%%%%%%%%%%%%%%%%%%%%%%%%%%%%%%%%%%%%%%%%%%%
%%%%%%%%%%%%%%%%%%%%%%%%%%%%%%%%%%%%%%%%%%%%%%%%%%
\section*{Acknowledgments}
The authors would like to thank the respectful referee for his/her comments on the manuscript.
%%%%%%%%%%%%%%%%%%%%%%%%%%%%%%%%%%%%%%%%%%%%%%%%%%
%%%%%%%%%%%%%%%%%%%%%%%%%%%%%%%%%%%%%%%%%%%%%%%%%%
\section*{Declarations} 
\begin{itemize}
\item[]\textbf{Ethical Approval}: Not applicable. 
\item[]\textbf{Competing interests}: There is no conflict of interest.
\item[]\textbf{Authors' contributions}: All the authors were responsible for conceptualizing, writing, reviewing, and editing the manuscript.
\item[]\textbf{Funding}: No Funding.
\item[]\textbf{Availability of data and materials}: Not applicable.
\end{itemize}
%%%%%%%%%%%%%%%%%%%%%%%%%%%%%%%%%%%%%%%%%%%%%%%%%%
%%%%%%%%%%%%%%%%%%%%%%%%%%%%%%%%%%%%%%%%%%%%%%%%%%

\end{document}